\newtheorem{proposition}{Proposition}
\newtheorem{lemma}{Lemma}
\newtheorem{theorem}{Theorem}
\newtheorem{corollary}{Corollary}
\theoremstyle{definition}
\newtheorem{definition}{Definition}
\theoremstyle{remark}
\newtheorem {remark}{Remark}
\DeclareMathOperator{\Spec}{Spec}
\DeclareMathOperator{\Aut}{Aut}
\DeclareMathOperator{\Ker}{Ker}
\def\GG{{\mathbb G}}
\def\CC{{\mathbb C}}
\def\KK{{\mathbb K}}
\def\ZZ{{\mathbb Z}}
\def\QQ{{\mathbb Q}}
\def\AA{{\mathbb A}}
\def\VVV{\mathcal{V}}
\def\WWW{\mathcal{W}}
\def\OOO{\mathcal{O}}
\def\RRR{\mathfrak{R}}
\renewcommand{\phi}{\varphi}
\renewcommand{\ge}{\geqslant}
\renewcommand{\le}{\leqslant}
\begin{document}
\date{}
\title[Limit points and additive group actions]{Limit points and additive group actions}
\author{Ivan Arzhantsev}
\thanks{The research was supported by Russian Science Foundation, grant 19-11-00056}
\address{HSE University, Faculty of Computer Science, Pokrovsky Boulevard 11, Moscow, 109028 Russia}
\email{arjantsev@hse.ru}
\subjclass[2010]{Primary 14J50, 14R20; \ Secondary 13A50, 14L30}
\keywords{Affine variety, torus action, additive group action, locally nilpotent derivation} 

\maketitle
\begin{abstract}
We show that an effective action of the one-dimensional torus $\GG_m$ on a normal affine algebraic variety $X$ can be extended to an effective action of a semi-direct product $\GG_m\rightthreetimes \GG_a$ with the same general orbit closures if and only if there is a divisor $D$ on $X$ that consists of $\GG_m$-fixed points. This result is applied to the study of orbits of the automorphism group $\Aut(X)$ on $X$. 
\end{abstract} 


\section{Introduction}
\label{sec1}

Let $X$ be an irreducible affine variety over an algebraically closed field $\KK$ of characteristic zero. Assume that $X$ is equipped with an effective action $\GG_m\times X\to X$ of the one-dimensional algebraic torus $\GG_m$. If the orbit $\GG_m\cdot x$ of some point $x\in X$ is non-closed, the complement $\overline{\GG_m\cdot x}\setminus \GG_m\cdot x$ consists of one point $x_0$ called the \emph{limit point} of the orbit $\GG_m\cdot x$. 

Limit points play an important role in the theory of algebraic transformation groups. Let $G$ be a reductive algebraic group acting linearly in a vector space $V$. A vector $v\in V$ is called \emph{nilpotent} if the closure of the orbit $G\cdot v$ contains zero. Equivalently, all homogeneous invariant polynomials of positive degree on $V$ vanish at $v$. The Hilbert-Mumford Criterion proved by Hilbert (1893) for the action of the unimodular group in a space of forms and by Mumford (1965) in the general situation claims that a vector $v\in V$ is nilpotent if and only if there is a one-dimensional torus $\GG_m\subseteq G$ such that $0\in\overline{\GG_m\cdot v}$; see e.g. \cite[Theorem~5.1]{PV}.

More generally, Richardson and Birkes proved that if a reductive algebraic group acts regularly on an affine variety $X$ and $x\in X$ is some point, then there is a one-dimensional torus $\GG_m\subseteq G$ such that the closure $\overline{\GG_m\cdot x}$ intersects the (unique) closed $G$-orbit in $\overline{G\cdot x}$; see \cite[Theorem~6.9]{PV}.

The study of limit points of a $\GG_m$-action on a smooth projective variety $X$ and the $\GG_m$-module structure on the tangent space of a limit point leads to the so-called Bialynicki-Birula decomposition of the variety $X$ into locally closed $\GG_m$-invariant subsets~\cite{BB}. This decomposition is used in many problems of Algebraic Geometry, Topology and Representation Theory. 

\smallskip

One may ask whether the limit point $x_0$ of a non-closed orbit $\GG_m\cdot x$ on an affine variety $X$ is equivalent to the point $x$ in the sense that $x_0$ can be sent to $x$ by an automorphism of~$X$. In general, this is not the case at least because the point $x_0$ can be singular while $x$ is smooth. At the same time, the cases when $x_0$ is equivalent to $x$ are of particular interest, and they are the object of study of this note. 

One way to guarantee that $x_0$ and $x$ are equivalent is to find an algebraic group $H$ acting on $X$ such that the orbit $H\cdot x$ contains both the orbit $\GG_m\cdot x$ and the point $x_0$. We are going to apply this idea in the case when $H$ is the additive group $\GG_a=(\KK,+)$. 

\begin{definition}
Let $X$ be an irreducible affine variety with an efffective action of the one-dimensional torus $\GG_m$. We say that a regular action $\GG_a\times X\to X$ is \emph{compatible} if the subgroup $\GG_m$ normalizes the subgroup $\GG_a$ in the automorphsim group $\Aut(X)$ and generic $\GG_a$-orbits on $X$ coincide with closures of generic $\GG_m$-orbits. 
\end{definition} 

Equivalently, to construct a compatible $\GG_a$-action on an affine $\GG_m$-variety $X$ is the same as to extend the $\GG_m$-action to an effective action of a semi-direct product $\GG_m\rightthreetimes \GG_a$ such that the generic orbit closures of the extended action coincides with generic orbit closures of the $\GG_m$-action. 

Let us recall that any orbit of a unipotent group action on an affine variety is closed ~\cite[Section~1.3]{PV}. So generic $\GG_a$-orbits on $X$ are closed curves isomorphic to the affine line.

\smallskip

Now we come to the main result of this note. 

\begin{theorem} \label{tmain}
Let $X$ be a normal affine variety with an effective action of the one-dimensional torus $\GG_m$. Then there exists a compatible $\GG_a$-action on $X$ if and only if the variety $X$ contains a prime divisor $D$ that is fixed by $\GG_m$ pointwise. 
\end{theorem}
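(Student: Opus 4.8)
The plan is to recast everything in terms of locally nilpotent derivations (LNDs) of the coordinate ring $A=\KK[X]=\bigoplus_{u\in\ZZ}A_u$, where the $\ZZ$-grading encodes the $\GG_m$-action and $A_0=A^{\GG_m}$. First I would record the dictionary: a compatible $\GG_a$-action is the same datum as a nonzero LND $\partial$ of $A$ that is homogeneous with respect to the grading (this is precisely the requirement that $\GG_m$ normalize $\GG_a$, i.e.\ that $\partial$ be a $\GG_m$-eigenvector in $\Der A$) and whose kernel equals $A_0$. Indeed, a regular function is $\GG_a$-invariant iff it is constant on general $\GG_a$-orbits, and the coincidence of general $\GG_a$-orbits with general $\GG_m$-orbit closures forces $\Ker\partial=A^{\GG_m}=A_0$; conversely, if $\Ker\partial=A_0$ then both families of curves lie in, and fill out, the general fibre of the rational quotient $X\to Y:=\Spec A_0$, so they coincide.

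Next I would pin down the shape of $\partial$. Since the weights generate $\ZZ$, there is a homogeneous $t\in\KK(X)^{\times}$ of degree $1$ with $\KK(X)=K_0(t)$, where $K_0:=\KK(X)^{\GG_m}$; extending $\partial$ gives $\partial(t)=c\,t^{k+1}$ for some $c\in K_0$, with $k=\deg\partial$. Computing $\partial^{\,j}(\phi t^{d})=\phi\,c^{j}\bigl(\prod_{i=0}^{j-1}(d+ik)\bigr)t^{\,d+jk}$ for $\phi\in K_0$ and invoking local nilpotence, one sees that for every weight $d$ some factor $d+ik$ must vanish; as the weights generate $\ZZ$ this is possible only for $k=\pm1$, and then the grading is semidefinite. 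Replacing $\GG_m$ by its inverse if needed, I may assume $k=-1$ and $A=\bigoplus_{u\ge0}A_u$. For the ``only if'' direction I then argue geometrically: the general $\GG_m$-orbit is non-closed (a copy of $\GG_m$) and its closure, being a $\GG_a$-orbit $\cong\AA^1$, consists of the orbit together with a single limit point $x_0\in X^{\GG_m}$. The assignment ``orbit $\mapsto x_0$'' is injective (the orbit is the unique $\GG_a$-orbit through $x_0$), so limit points sweep out a subset of dimension $\dim X-1$, whose closure has an irreducible component $D\subseteq X^{\GG_m}$ of codimension one: the desired prime divisor fixed pointwise.

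For the ``if'' direction the key preliminary step is to show that a $\GG_m$-fixed prime divisor $D$ already forces the semidefinite grading. Let $v_D$ be the divisorial valuation and work in the DVR $\mathcal O_{X,\eta_D}$ with maximal ideal $\mathfrak m$; since $\GG_m$ fixes $D$ pointwise it acts trivially on the residue field $\KK(D)$. For homogeneous $f$ with $r=v_D(f)$ the class $\bar f$ spans the line $\mathfrak m^{r}/\mathfrak m^{r+1}\cong\KK(D)$, on which $\GG_m$ acts by $\lambda\mapsto\lambda^{rw}$ for the ``normal weight'' $w$ on $\mathfrak m/\mathfrak m^{2}$; comparing with the weight $\deg f$ of $f$ yields $\deg f=w\,v_D(f)$. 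Effectiveness forces $w=\pm1$, so after orienting $\GG_m$ we obtain $v_D(f)=\deg f\ge0$ for all homogeneous $f$, whence the grading is non-negative. With this in hand I build the derivation as $\partial=c\,\tfrac{d}{dt}$ for a suitable nonzero $c\in K_0$: in the Dolgachev--Pinkham--Demazure presentation $A_u=\{\phi\in K_0:\operatorname{div}\phi+\lfloor uD_0\rfloor\ge0\}$ the condition $cA_u\subseteq A_{u-1}$ for all $u$ reduces to $\operatorname{div}(c)\ge\lfloor uD_0\rfloor-\lfloor(u-1)D_0\rfloor$, and the right-hand sides are bounded above by a single divisor $E$ on the affine quotient $Y$; choosing $c$ in the nonzero fractional ideal $\{c\in K_0:\operatorname{div}c\ge E\}$ makes $\partial$ preserve $A$. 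Being homogeneous of degree $-1$ on a non-negatively graded ring, $\partial$ is automatically locally nilpotent, and $\Ker\partial=A_0$ because $\partial(\phi t^{u})=cu\,\phi t^{u-1}\neq0$ for $u\ge1$.

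The step I expect to be the main obstacle is this ``if'' direction, on two counts. The structural identity $v_D(f)=\deg f$ is the genuine bridge between the two conditions: it is exactly what rules out mixed-sign gradings and what turns the abstract fixed divisor into usable data. Then one must produce the correcting invariant $c$ that renders the naive field derivation $d/dt$ regular on all of $A$; this is a divisorial existence statement on $Y$, innocuous only because $Y$ is affine, and it is precisely where normality of $X$ and the DPD description are used in earnest.
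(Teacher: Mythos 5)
Your dictionary between compatible $\GG_a$-actions and homogeneous LNDs with kernel $A_0$, and your ``only if'' argument, are essentially sound, and they differ mildly from the paper's route: the paper proves an equivalence lemma (fixed prime divisor $\Leftrightarrow$ parabolicity $\Leftrightarrow$ non-negative grading with $A_0$ of transcendence degree $\dim X-1$) via the quotient morphism $\pi\colon X\to\Spec A_0$, whereas you use the structure of a vertical homogeneous LND plus injectivity of the orbit-to-limit-point map. Even in your preliminary step for the ``if'' direction there is only a repairable technicality: $\OOO_{X,\eta_D}$ is \emph{not} a rational $\GG_m$-module (it is a local ring at a non-closed point), so it is not automatic that $\GG_m$ acts on $\mm/\mm^2$ through an integer character $\lambda\mapsto\lambda^w$; one fixes this by comparing two homogeneous elements directly (if $\deg f=d$, $\deg g=e$, $v_D(f)=r>0$, $v_D(g)=s>0$, then $f^s$ and $g^r$ have equal valuation, their classes in $\mm^{rs}/\mm^{rs+1}$ are proportional over $\KK(D)$, and applying $\lambda$ gives $ds=er$), which together with ``$v_D(f)=0\Rightarrow\deg f=0$'' yields $\deg f=w\,v_D(f)$ with $w\in\QQ$; the sign is all you need for semidefiniteness.

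The genuine gap is the next step of the ``if'' direction. From non-negativity of the grading you pass directly to the presentation $A_u=\{\phi\in K_0:\operatorname{div}\phi+\lfloor uD_0\rfloor\ge 0\}$ on the affine quotient $Y=\Spec A_0$. First, this presupposes that each $A_u$ has rank one over $A_0$, i.e.\ parabolicity ($\operatorname{tr.deg}A_0=\dim X-1$), which non-negativity alone does not give (elliptic actions are non-negatively graded with $A_0=\KK$); parabolicity does follow from your identity $\deg f=v_D(f)$, since it forces $I(D)=\oplus_{u>0}A_u$ and hence $\KK[D]\cong A_0$, but you never draw this consequence. Second, and more seriously, the asserted presentation over the \emph{affine} quotient is false in general even for parabolic actions. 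Take $X=\{x_1y_2=x_2y_1\}\subseteq\AA^4$ (the cone over a quadric surface, a normal threefold) with weights $(0,0,1,1)$: the prime divisor $D=\{y_1=y_2=0\}$ is fixed pointwise, $A_0=\KK[s_1,s_2]$, and $A_u=\mm^u t^u$ with $\mm=(s_1,s_2)$; since $\mm^u$ imposes a condition at a codimension-two point of $Y=\AA^2$, it is not reflexive, hence not of the form $\{\phi:\operatorname{div}\phi+F\ge 0\}$ for any divisor $F$ and any choice of $t$. The correct structure theorem (Altmann--Hausen) places the polyhedral divisor on a blowup $Y'$ of $Y$; your ``choose $c$ with $\operatorname{div}c\ge E$'' argument can be repaired there (components of $E$ map to proper closed subsets of the affine $Y$, so products of regular functions vanishing on those images still work), but as written the existence of $c$ rests on a false claim. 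The paper obtains this existence much more cheaply and with no divisorial machinery: localizing $A$ at $A_0\setminus\{0\}$ and using integral closedness gives $C=K_0[T]$; then a single $b\in A_0$ clearing the denominators of $\frac{\partial}{\partial T}$ applied to finitely many homogeneous generators of $A$ makes $b\,\frac{\partial}{\partial T}$ a regular homogeneous LND, Leibniz handling all other elements. In the quadric-cone example that argument immediately produces $\delta=\partial/\partial t$, so the theorem itself is untouched; it is your intermediate DPD claim that fails.
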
 

Theorem~\ref{tmain} allows for several applications. We say that an algebraic variety $X$ is \emph{almost homogeneous} if there is a regular action $G\times X\to X$ of an algebraic group $G$ with an open orbit. 

\begin{corollary} \label{cmain}
Let $X$ be a normal affine almost homogeneous variety and $D$ be a prime divisor on $X$. Assume that there is a non-trivial action of a one-dimensional torus $\GG_m$ on $X$ such that $D$ is fixed by $\GG_m$ pointwise.
Then the automorphism group $\Aut(X)$ acts on $X$ with an open orbit and this orbit intersects the divisor $D$. 
\end{corollary}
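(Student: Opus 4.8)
The plan is to split the statement into two parts: the existence of a dense open $\Aut(X)$-orbit $\Omega$, which is essentially formal, and the assertion $\Omega\cap D\neq\emptyset$, which is the substance and which I would deduce from Theorem~\ref{tmain}.

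First I would settle the existence of the open orbit. By assumption $X$ is almost homogeneous, so some algebraic group $G$ acts on $X$ with a dense open orbit $G\cdot x$. Since $G\subseteq\Aut(X)$ and every automorphism is a homeomorphism of $X$, the set $\Aut(X)\cdot x=\bigcup_{g\in\Aut(X)}g(G\cdot x)$ is a union of open subsets, hence open; it is dense because it contains $G\cdot x$, and it is the unique such orbit because $X$ is irreducible. Denote it by $\Omega$. It remains to show $\Omega\cap D\neq\emptyset$.

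Next I would bring in the torus. Applying Theorem~\ref{tmain} to the given $\GG_m$-action, for which $D$ is a prime divisor of fixed points, produces a compatible $\GG_a$-action; write $H=\GG_m\rightthreetimes\GG_a\subseteq\Aut(X)$. By compatibility, for a generic point $x$ the $\GG_a$-orbit coincides with the closure of the $\GG_m$-orbit, so $\GG_a\cdot x=\overline{\GG_m\cdot x}=\GG_m\cdot x\sqcup\{x_0\}$, where $x_0$ is the limit point and simultaneously the unique $\GG_m$-fixed point lying on this affine line. The geometric input I need is that these limit points are dense in $D$: the $\GG_a$-action furnished by Theorem~\ref{tmain} runs along the $\GG_m$-orbits and its generic orbits accumulate precisely along $D$, so a generic point of $D$ is the limit point $x_0$ of a generic $\GG_m$-orbit and, conversely, $\GG_a$ moves such a point transversally off $D$ into the open $\GG_m$-orbit.

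With this in hand the conclusion is immediate. I would choose $x$ generic enough that $x\in\Omega$, that $\GG_a\cdot x=\overline{\GG_m\cdot x}$, and that its limit point $x_0$ lies on $D$; all three conditions hold on dense subsets. Because $\GG_a\subseteq\Aut(X)$, the orbit $\Omega=\Aut(X)\cdot x$ is $\GG_a$-stable, whence $x_0\in\GG_a\cdot x\subseteq\Omega$. Since also $x_0\in D$, we obtain $x_0\in\Omega\cap D$, as required. The main obstacle is the geometric input of the previous paragraph: one must read off from the explicit construction behind Theorem~\ref{tmain} (the homogeneous locally nilpotent derivation, or Demazure-root datum, attached to $D$) that on each generic $\GG_a$-orbit $\cong\AA^1$ the distinguished $\GG_m$-fixed point actually lies on $D$, equivalently that $D$ is the closure of the locus of limit points of generic $\GG_m$-orbits. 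Everything else is formal.
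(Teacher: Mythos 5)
Your overall plan coincides with the paper's proof: the open $\Aut(X)$-orbit is produced exactly as the paper does (as the union of automorphism-translates of the open $G$-orbit), and the intersection with $D$ is obtained by running along a generic orbit of the compatible $\GG_a$-action supplied by Theorem~\ref{tmain}. The formal parts of your argument (openness and uniqueness of the orbit $\Omega$, its $\GG_a$-stability, the conclusion $x_0\in\Omega\cap D$) are all fine.

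The gap is precisely the step you flag as ``the main obstacle'' and then leave unproved: that the limit point $x_0$ of a generic $\GG_m$-orbit lies on $D$. Your justification (``the $\GG_a$-orbits accumulate precisely along $D$'') restates the claim rather than proving it, and it does not follow from the bare statement of Theorem~\ref{tmain}: compatibility only tells you that a generic $\GG_a$-orbit is $\GG_m\cdot x\sqcup\{x_0\}$ with $x_0$ a $\GG_m$-fixed point, and a priori the fixed-point locus $X^{\GG_m}$ could have divisorial components other than $D$, on which these limit points might accumulate instead. The paper closes exactly this gap with the final assertion of Lemma~\ref{ulem}: under hypothesis b), the divisor $D$ coincides with the whole fixed-point locus, indeed with the set of all closed orbits. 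The proof of that assertion is structural rather than read off the explicit locally nilpotent derivation: for the quotient $\pi\colon X\to Y=\Spec\KK[X]^{\GG_m}$, points of $D$ are closed orbits and $\pi$ separates closed orbits, so $\pi|_D$ is injective; effectiveness forces $\dim Y<\dim X$, whence $\dim Y=\dim D=\dim X-1$ and $\pi|_D$ is bijective onto $Y$; since every fiber of $\pi$ contains a unique closed orbit, that closed orbit --- in particular the limit point of any non-closed $\GG_m$-orbit --- is the point where $D$ meets that fiber. With this lemma invoked (as the paper does in one line), your argument becomes complete and is essentially identical to the paper's.
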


\medskip

Let us recall that an affine variety $X$ is called \emph{rigid}, if $X$ admits no non-trivial action of the group $\GG_a$. It is well known that $X$ is rigid if and only if the algebra $\KK[X]$ admits no non-zero locally nilpotent derivation \cite[Section~1.5.1]{Fr}. 

\begin{corollary} \label{rigid} 
Let $X$ be a normal affine variety with an effective action of the one-dimensional torus $\GG_m$. Assume that there is a prime divisor $D$ on $X$ that is fixed by $\GG_m$ pointwise. 
Then the variety $X$ is not rigid. In particular, if $\dim X\ge 2$ then the automorphism group $\Aut(X)$ is not a (finite dimensional) algebraic group. 
\end{corollary}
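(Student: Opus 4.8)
The plan is to deduce the statement directly from Theorem~\ref{tmain} together with the standard dictionary between $\GG_a$-actions and locally nilpotent derivations. Since $X$ is a normal affine variety carrying an effective $\GG_m$-action and a prime divisor $D$ fixed pointwise by $\GG_m$, Theorem~\ref{tmain} provides a compatible $\GG_a$-action $\GG_a\times X\to X$. First I would check that this action is non-trivial: by the definition of compatibility its generic orbits coincide with closures of generic $\GG_m$-orbits, and because the $\GG_m$-action is effective these orbits are one-dimensional. Hence the $\GG_a$-action has one-dimensional generic orbits and is in particular non-trivial, so by the very definition of rigidity recalled above, $X$ is not rigid.

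For the \emph{in particular} assertion I would pass to the non-zero locally nilpotent derivation $\delta$ of $\KK[X]$ whose exponential recovers the $\GG_a$-action. The idea is to manufacture an infinite family of pairwise independent $\GG_a$-actions out of $\delta$. Recall that if $f\in\Ker\delta$ then $f\delta$ is again locally nilpotent, so each such $f$ yields a one-parameter unipotent subgroup $\exp(tf\delta)$ of $\Aut(X)$. It therefore suffices to exhibit infinitely many $\KK$-linearly independent elements of $\Ker\delta$. The kernel $\Ker\delta$ is the algebra of $\GG_a$-invariant functions, and for a non-zero locally nilpotent derivation on an affine variety with $\dim X\ge 2$ it has transcendence degree $\dim X-1\ge 1$ over $\KK$. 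Consequently $\Ker\delta$ contains a non-constant function $f$, and then $1,f,f^2,\dots$ are linearly independent and all lie in $\Ker\delta$.

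Finally I would argue that the resulting derivations $f^n\delta$, $n\ge 0$, are linearly independent in $\Der(\KK[X])$, so that $\Aut(X)$ contains unipotent one-parameter subgroups whose tangent derivations span an infinite-dimensional subspace of $\Der(\KK[X])$. Since the Lie algebra of a finite-dimensional algebraic group acting faithfully on $X$ would embed as a finite-dimensional subspace of $\Der(\KK[X])$, this rules out $\Aut(X)$ being a finite-dimensional algebraic group.

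The main obstacle, I expect, is making this last step fully rigorous: one must relate the abstract group $\Aut(X)$ and its candidate Lie algebra to concrete derivations of $\KK[X]$, and then argue that infinitely many linearly independent locally nilpotent derivations genuinely obstruct finite-dimensionality. This is where I would invoke the standard structure theory of automorphism groups of affine varieties, the remaining verifications being routine.
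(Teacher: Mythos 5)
Your proposal is correct and follows essentially the same route as the paper: apply Theorem~\ref{tmain} to obtain a compatible (hence non-trivial) $\GG_a$-action, so $X$ is not rigid, and then use the fact that the kernel of the associated locally nilpotent derivation $\delta$ has transcendence degree $\dim X-1\ge 1$ to produce infinitely many independent locally nilpotent derivations $f\delta$ with $f\in\Ker\delta$, contradicting finite-dimensionality of $\Aut(X)$. The paper phrases the last step via commutative unipotent subgroups $\{\exp(f\delta)\mid f\in V\}$ of unbounded dimension $\dim V$ for finite-dimensional subspaces $V\subseteq\Ker\delta$, citing \cite[Proposition~7.5]{Kr}, rather than via your tangent derivations $f^n\delta$ spanning an infinite-dimensional subspace of $\Der(\KK[X])$; this is the same argument, and the foundational point you flag as the \emph{main obstacle} is exactly what that citation covers.
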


If an affine variety $X$ admits two actions of the torus $\GG_m$ that do not commute, then $X$ admits a non-trivial $\GG_a$-action; see \cite[Section~3]{FZ} and~\cite[Proof of Theorem~2.1]{AG}. This result plays a key role in the description of the automorphism group of a rigid variety~\cite{AG}. Theorem~\ref{tmain} is also a result of the same type: it guarantees the existence of a non-trivial $\GG_a$-action on $X$ provided $X$ admits a $\GG_m$-action of a specific form. 

\smallskip

We prove Theorem~\ref{tmain} and Corollaries~\ref{cmain}-\ref{rigid} in Section~\ref{sec2}. In Section~\ref{sec3} we give examples that illustrate Theorem~\ref{tmain}. In fact, the case of root subgroups on non-degenerate affine toric varieties \cite[Section~2]{AKZ} was the starting point of this work. We discuss this example in detail and present some generalizations. 

\smallskip

Let us mention some more researches related to the subject of this paper. In~\cite{FZ2}, a classification of locally nilpotent derivations on the algebra of regular functions on an affine surface with a $\CC^*$-action is given. In particular, \cite[Theorem~3.12]{FZ2} deals with locally nilpotent derivations, which we need in the proof of Theorem~\ref{tmain}, in the case of surfaces. A related result on existence of $\GG_a$-actions on affine varieties equipped with an action of the torus $\GG_m$ may be found in~\cite[Proposition~3]{GS}. Finally,  when the first version of this paper was written we have learned that a result similar to Theorem~\ref{tmain} appeared independently in the diploma paper by Valentin Losev (unpublished). 

\smallskip

\emph{Acknowledgments.}\ The author is grateful to Alvaro Liendo for a fruitful discussion, and to Sergey Gaifullin, Michail Zaidenberg, and two anonymous reviewers for many useful comments, suggestions and references to related results. 


\section{Proofs of main results}
\label{sec2}

Let $X$ be an irreducible algebraic variety equipped with a regular action $T\times X\to X$ of an algebraic torus $T$. Consider a regular action $\GG_a\times X\to X$ such that the image of $\GG_a$ in the automorphism group $\Aut(X)$ is normalized by the image of $T$. We say that such an action is \emph{vertical} if the induced action of the group $\GG_a$ on the field $\KK(X)^T$ of rational $T$-invariants is identical, and is \emph{horizontal} otherwise. Equivalently, a $T$-normalized $\GG_a$-action on $X$ is vertical if 
a generic $\GG_a$-orbit on $X$ is contained in the closure of a $T$-orbit on $X$ \cite[Section~1.2]{L1}; it follows from Rosenlicht's Theorem which claims that generic orbits of an action of an algebraic group are separated by rational invariants, see e.g. \cite[Theorem~2.3]{PV}. In turn, horizontal $\GG_a$-actions are characterized by the condition that generic $\GG_a$-orbits are transversal to generic $T$-orbits. In particular, compatible $\GG_a$-actions defined in the introduction are precisely vertical actions on affine varieties for $T=\GG_m$. 

Let us consider $\GG_m$-actions on affine varieties more systematically. There is a natural correspondence between effective $\GG_m$-actions on an affine variety $X$ and $\ZZ$-gradings on the algebra $A=\KK[X]$, i.e.
$$
A=\bigoplus_{i\in\ZZ} A_i, 
$$
such that the set of indices $i$ with $A_i\ne 0$ generates the group $\ZZ$. Let us recall that a homogeneous component $A_i$ consists of functions $f\in \KK[X]$ such that $f(t\cdot x)= t^if(x)$ for all $t\in\GG_m$ and all $x\in X$. Up to automorphism of the torus $\GG_m$ we may assume that there exists a positive $i$ with $A_i\ne 0$. 

A $\GG_m$-action on an affine variety $X$ is called \emph{elliptic} if  the only closed orbit is a $\GG_m$-fixed point $P$. In this case all $\GG_m$-orbits in $X$ contain $P$ in their closures. Equivalently, the algebra of invariants $\KK[X]^{\GG_m}$ coincides with the ground field $\KK$. In terms of gradings, elliptic actions correspond to positive gradings $A=\oplus_{i\ge 0} A_i$ of the algebra $A=\KK[X]$ with $A_0=\KK$. 

Oppositely, an effective $\GG_m$-action is \emph{hyperbolic} if generic $\GG_m$-orbits on $X$ are closed. This means that the grading  $A=\oplus_{i\in\ZZ} A_i$ includes non-zero components of both positive and negative degrees. Moreover, this condition is equivalent to existence of at least one closed orbit of positive dimension. In particular, if $A=\oplus_{i\ge 0} A_i$ then the closure of any one-dimensional $\GG_m$-orbit contains a limit point. 

Finally, we say that an effective $\GG_m$-action on an irreducible affine variety $X$ is \emph{parabolic} if there is an open subset $U\subseteq X$ such that all $\GG_m$-orbits in $U$ are non-closed in $X$ and their closures in $X$ do not intersect pairwise. 

\begin{lemma} \label{ulem}
Let $X$ be a normal affine variety equipped with an effective action of the torus~$\GG_m$. The following conditions are equivalent. 
\begin{enumerate}
\item[a)]
The action is parabolic.
\item[b)]
There is a prime divisor $D$ in $X$ that is fixed by $\GG_m$ pointwise. 
\item[c)]
The grading has the form $A=\oplus_{i\ge 0} A_i$ and the transcendence degree of the subalgebra $A_0=\KK[X]^{\GG_m}$ equals $\dim X-1$.  
\end{enumerate}
Under these conditions, the divisor $D$ coincides with the set of $\GG_m$-fixed points on $X$. 
\end{lemma}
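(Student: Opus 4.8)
The plan is to route every implication through two invariants of the $\GG_m$-action: the non-negativity of the grading $A=\oplus_{i}A_i$ and the dimension of the quotient $Y=\Spec A_0$, linked by the quotient morphism $\pi\colon X\to Y$.

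First I would record the geometry of $\pi$ under a non-negative grading $A=\oplus_{i\ge 0}A_i$. Since $\GG_m$ is reductive, $A_0$ is finitely generated and normal, so $Y$ is a normal affine variety and $\pi$ is a good quotient; hence each fibre contains a unique closed orbit. For a non-negative grading an orbit is closed exactly when it is a fixed point, because a non-fixed point limits to a fixed point as $t\to 0$ (one has $f(t\cdot x)=\sum_{i\ge 0}t^if_i(x)\to f_0(x)$). Therefore each fibre of $\pi$ meets the fixed-point set $X^{\GG_m}$ in exactly one point, so $\pi|_{X^{\GG_m}}\colon X^{\GG_m}\to Y$ is a bijective morphism; in particular $X^{\GG_m}$ is irreducible and $\dim X^{\GG_m}=\dim Y=\operatorname{tr.deg}A_0$. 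Effectiveness makes the generic orbit one-dimensional, so $\dim Y\le\dim X-1$.

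Next I would prove (b)$\Rightarrow$(c). The crucial point is that a pointwise-fixed prime divisor forces the grading to be non-negative. At a general point $p\in D$ the variety $X$ is smooth (normality gives regularity in codimension one), and by Luna's slice theorem the action near $p$ is linearly equivalent to the $\GG_m$-representation on $T_pX$. As $T_pD$ is a fixed hyperplane, the remaining transverse weight $w$ is non-zero by effectiveness, and after the normalization that a positive weight occurs we may take $w>0$; every homogeneous function then has weight a non-negative multiple of $w$, so $A_i=0$ for $i<0$. Given non-negativity, the first paragraph yields $\dim X^{\GG_m}=\operatorname{tr.deg}A_0$, and since $D\subseteq X^{\GG_m}$ has dimension $\dim X-1$ we obtain $\operatorname{tr.deg}A_0\ge\dim X-1$; with the reverse inequality this gives (c). Conversely (c)$\Rightarrow$(b) is immediate: $X^{\GG_m}$ is then irreducible of dimension $\dim X-1$, hence a pointwise-fixed prime divisor, which also proves the final assertion $D=X^{\GG_m}$.

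Finally I would treat (a)$\Leftrightarrow$(c). For (a)$\Rightarrow$(c), parabolicity supplies an open set of non-closed orbits, so the action is not hyperbolic and the grading is non-negative up to the sign convention. If $\dim Y<\dim X-1$, a general fibre $F$ would have dimension $\ge 2$ and contain infinitely many one-dimensional orbits meeting $U$; but $F$ has a single fixed point, to which all orbits in $F$ converge as $t\to 0$, so their closures share that point, contradicting pairwise disjointness. Hence $\dim Y=\dim X-1$. For (c)$\Rightarrow$(a) I would apply Rosenlicht's theorem to find a dense open $V\subseteq Y$ over which the fibres of $\pi$ are single one-dimensional orbit closures, and set $U=\pi^{-1}(V)\setminus D$; each orbit in $U$ is non-closed (its closure adds only the limit point lying in $D$), and orbits over distinct points of $V$ have disjoint closures. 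The main obstacle I expect is precisely the step (b)$\Rightarrow$ non-negativity of the grading: one must convert the pointwise-fixedness of the codimension-one subvariety $D$ into a global statement about weights, and the cleanest route is local linearization at a general smooth point of $D$, taking care that effectiveness forces the transverse weight to be non-zero and that the sign convention makes it positive.
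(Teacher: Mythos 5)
Most of your argument is sound, and your route for b)$\Rightarrow$c) (slice-theorem linearization at a smooth point of $D$, then the Krull/weight argument) is a legitimate alternative to the paper's, which instead deduces non-negativity of the grading from the fact that $\pi|_D$ is injective with closed image, hence bijective onto $Y$. The genuine gap is in c)$\Rightarrow$a), at the step ``apply Rosenlicht's theorem to find a dense open $V\subseteq Y$ over which the fibres of $\pi$ are single one-dimensional orbit closures.'' Rosenlicht's theorem separates generic orbits by \emph{rational} invariants, i.e.\ by elements of $\KK(X)^{\GG_m}$, whereas the fibres of $\pi$ are cut out by the subalgebra $A_0=\KK[X]^{\GG_m}$. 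To pass from one to the other you need $\operatorname{Frac}(A_0)=\KK(X)^{\GG_m}$, equivalently that generic fibres of $\pi$ are irreducible; a priori $\KK(X)^{\GG_m}$ is only a finite extension of $\operatorname{Frac}(A_0)$ (the transcendence degrees agree under c)), and then a generic fibre of $\pi$ would be a union of several orbit closures, all glued along the unique fixed point of that fibre, so parabolicity would fail.

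This is not a formality: your proof of c)$\Rightarrow$a) nowhere uses normality, and the implication is false without it. Take $X=\Spec\KK[x^2,xy,y]$, i.e.\ the hypersurface $\{uw^2=v^2\}\subseteq\AA^3$ with the action $t\cdot(u,v,w)=(u,tv,tw)$ (the image of $\AA^2$, $t\cdot(x,y)=(x,ty)$, under $(x,y)\mapsto(x^2,xy,y)$). Here the grading is non-negative and $A_0=\KK[u]$ has transcendence degree $1=\dim X-1$, so c) holds; but the fibre of $\pi$ over any $u_0\ne 0$ consists of the two distinct orbit closures $\{v=\pm\sqrt{u_0}\,w\}$ meeting at the fixed point $(u_0,0,0)$, so a) fails on every dense open set. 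Rosenlicht's theorem of course still holds there: $v/w$ is a rational invariant separating the two orbits; it just does not lie in $\operatorname{Frac}(A_0)=\KK(u)$. The paper closes exactly this gap by observing that every fibre of $\pi$ contains a unique closed orbit, hence is connected, and then invoking \cite[Proposition~4]{Ar} to conclude that generic fibres are irreducible. Alternatively you can use normality directly: if $f\in\KK(X)^{\GG_m}$ is algebraic over $\operatorname{Frac}(A_0)$, then $af$ is integral over $A_0$ for some nonzero $a\in A_0$, hence $af\in A\cap\KK(X)^{\GG_m}=A_0$ and $f\in\operatorname{Frac}(A_0)$; since both fields have transcendence degree $\dim X-1$, they coincide, and your appeal to Rosenlicht then goes through. (A minor further point: bijectivity of $\pi|_{X^{\GG_m}}$ alone does not give irreducibility of $X^{\GG_m}$; you also need that $\pi$ maps invariant closed sets onto closed sets, so that $Y$ is covered by the finitely many closed images of the components and injectivity forces all components into a single one.)
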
 

\begin{proof}
Let us consider the quotient morphism $\pi\colon X\to Y=X/\!/\GG_m:=\Spec\KK[X]^{\GG_m}$.  This morphism separates closed orbits and sends $\GG_m$-invariant closed subsets of $X$ to closed subsets of $Y$ \cite[Section~4.4]{PV}. Since every fiber of $\pi$ contains a unique closed orbit, all fibers are connected. By~\cite[Proposition~4]{Ar} we conclude that generic fibers of $\pi$ are irreducible.  

We show that a) implies b). Since the action is parabolic, generic orbit closures on $X$ are separated by the morphism $\pi$. This proves that generic fibers of $\pi$ are one-dimensional and $\dim Y=\dim X-1$. Moreover, the subvariety of $\GG_m$-fixed points on $X$ projects to $Y$ dominantly. We conclude that some irreducible component $D$ of this subvariety is a prime divisor. 

Now let us check that b) implies c). Since the $\GG_m$-action on $X$ is effective we have $\dim Y<\dim X$. On the other hand, the divisor $D$ projects to a closed subset of $Y$ and the projection is injective. This means the restriction of $\pi$ to $D$ is bijective.
In particular, all closed orbits on $X$ are fixed points and $\dim Y=\dim D=\dim X-1$. The first condition means that $A_i=0$ for all $i<0$ and the second one means that the transcendence degree of $A_0$ is $\dim X-1$. 

Let us show that c) implies a). Condition c) means that $\dim Y=\dim X-1$ and all closed orbits in $X$ are fixed points. This implies that generic orbits on $X$ are non-closed and their closures coincide with generic fibers of $\pi$. We conclude that 
generic orbit closures do not intersect pairwise, i.e., the action is parabolic.

Finally, since the restriction of $\pi$ to $D$ is bijective, every closed orbit on $X$ is a point in~$D$. 
\end{proof}

\begin{proof} [Proof of Theorem~\ref{tmain}]
Assume first that there is a compatible $\GG_a$-action on $X$. Then closures of generic $\GG_m$-orbits on $X$ coincide with $\GG_a$-orbits and so they do not intersect pairwise. This means that the $\GG_m$-action is parabolic and implication from a) to b) in 
Lemma~\ref{ulem} concludes the proof. 

Now let us assume that there is a prime divisor $D$ on $X$ that consists of $\GG_m$-fixed points. Then we have a grading $A=\oplus_{i\ge 0} A_i$ as in Lemma~\ref{ulem}, c). Let us show that all weight components $A_s$, $s>0$ are non-zero. Assume that some $A_s=0$. We localize the algebra $A$ by all non-zero elements of the subalgebra $B:=A_0$. The new algebra $C=A_{(B)}$ inherits the grading $C=\oplus_{i \ge 0} C_i$ from $A$, and $A_s=0$ implies $C_s=0$. By Lemma~\ref{ulem}, a), elements of $B$ separate 
generic $\GG_m$-orbits on $X$. This implies that the field $K:=\KK(X)^{\GG_m}$ of rational invariants coincides with the field of fractions of the algebra $B$ \cite[Lemma~2.1]{PV}. 

By construction, any two homogeneous elements in $C$ of the same degree are proportional over $K$. This shows that any homogeneous component in $C$ is at most one-dimensional over $K$. Since $C$ is a domain, we conclude that $C$ is a semigroup algebra
$K[\Gamma]$ for some subsemigroup $\Gamma$ in $(\ZZ_{\ge 0},+)$. By assumption, the algebra $A$ is integrally closed in its field of fractions. This implies that the algebra $C$ is integrally closed as well. But then the algebra $C$
is isomorphic to the polynomial algebra $K[T]$, where the element $T$ has degree $d>0$. Since the action of $\GG_m$ on $X$ is effective, we have $d=1$ and so $C_s\ne 0$ for all $s\ge 0$. 

We are going to construct a locally nilpotent derivation $\delta$ of the algebra $A$ that is homogeneous with respect to the grading and such that $\Ker\delta=B$. We start with the algebra $C=K[T]$ and the derivation $\delta'=\frac{\partial}{\partial T}$. Let $a_1,\ldots,a_m$ be homogeneous generators of the algebra~$A$. Then there is an element $b\in B$ such that all elements $b\delta'(a_1),\ldots,b\delta'(a_m)$ are in~$A$. Then the derivation $\delta=b\delta'$ is the required locally nilpotent derivation of the algebra~$A$. Consider the $\GG_a$-action of $X$ induced by the group $\{\text{exp}(s\delta) \, | \, s\in \KK\}$ of automorphisms of the algebra~$A$.  Such a $\GG_a$-subgroup is normalized by $\GG_m$ because the derivation $\delta$ is homogeneous, and the $\GG_a$-action is vertical because by construction $\delta$ annihilates all elements in $\KK(X)^{\GG_m}$. We conclude that the constructed $\GG_a$-action is compatible. 
\end{proof} 

\begin{remark}
The inverse implication in Theorem~\ref{tmain} should also follow from \cite[Corollary~2.8]{L1}. This approach is based on the technique of proper polyhedral divisors of Altmann and Hausen~\cite{AH}. In this note we prefer to use more elementary arguments given above.  
\end{remark}

\begin{proof} [Proof of Corollary~\ref{cmain}]
Since $X$ is almost homogeneous, there is a regular action $G\times X\to X$ of an algebraic group $G$ with an open orbit $\VVV$. Then the union of shifts $s\cdot\VVV$, where $s$ runs through all automorphisms of $X$, is an open orbit $\WWW$ of the group $\Aut(X)$ on $X$. By Theorem~\ref{tmain}, there is a compatible $\GG_a$-action on $X$. Since the divisor $D$ coincides with the set of $\GG_m$-fixed points on $X$, generic $\GG_a$-orbits intersect both $\VVV$ and $D$. This shows that generic points on $D$ lie in the same orbit of $\Aut(X)$ as $\VVV$. So the orbit $\WWW$ intersects the divisor $D$. 
\end{proof} 

\begin{proof} [Proof of Corollary~\ref{rigid}]
By Theorem~\ref{tmain}, there is a compatible $\GG_a$-action on $X$. So the variety $X$ is not rigid. It is well known that if an affine variety $X$ of dimension at least $2$ admits a $\GG_a$-action, then the automorphism group $\Aut(X)$ is not a (finite dimensional) algebraic group; see e.g. \cite[Proposition~7.5]{Kr}. Let us recall the arguments for this. Let $\delta$ be a locally nilpotent derivation on the algebra $\KK[X]$ that corresponds to the compatible $\GG_a$-action. The kernel $\Ker\delta$ is a subalgebra of transcendence degree $\dim X-1$; see~\cite[Principle~11]{Fr}. In particular, the kernel $\Ker\delta$ is an infinite dimensional $\KK$-vector space. For any finite dimensional subspace $V\subseteq\Ker\delta$ one can consider the subgroup $\{\text{exp}(f\delta)\, | \, f\in V\}$ of the automorphism group $\Aut(X)$. This is a commutative unipotent group of dimension $\dim V$. Since $\dim V$ is unbounded, all such subgroups can not be contained in a finite dimensional algebraic group. 
\end{proof} 

\begin{remark} \label{rem2}
Both Theorem~\ref{tmain} and Corollaries~\ref{cmain}-\ref{rigid} do not hold if the variety $X$ is non-normal: one may consider the cuspidal cubic $V(y^2-x^3)$ in $\AA^2$ with the torus action $t\cdot(x,y)=(t^2x,t^3y)$. 
\end{remark} 


\section{Examples and applications}
\label{sec3}

\subsection{Subtorus actions}
Let $N$ be the lattice of one-parameter subgroups of an algebraic torus $T$. Denote by $M$ the dual lattice of characters of $T$, and let $\langle\cdot,\cdot\rangle\colon N\times M \to\ZZ$ be the pairing between $N$ and $M$. We consider a regular action of $T$ on a normal affine variety $X$ and the corresponding grading
$$
A=\bigoplus_{u\in M} A_u
$$
of the affine algebra $A=\KK[X]$. Let $N_{\QQ}=N\otimes_{\ZZ}\QQ$ and $M_{\QQ}=M\otimes_{\ZZ}\QQ$ be the rational vector spaces generated by $N$ and $M$, respectively. We define the \emph{weight monoid} $M_X$ of the affine $T$-variety $X$
as the set of weights $u\in M$ such that $A_u\ne 0$, and the \emph{weight cone} $\omega(X)$ as the cone in $M_{\QQ}$ generated by the monoid $M_X$. Since the algebra $A$ is finitely generated, the monoid $M_X$ is finitely generated and the cone $\omega(X)$ is polyhedral. 

We say that a one-dimensional subtorus $\GG_m$ in $T$ is \emph{straightening} if the induced $\GG_m$-action on $X$ is parabolic. Let us recall that a \emph{facet} of a polyhedral cone is a proper face of maximal dimension. 

\begin{proposition} \label{rprop}
Let $X$ be a normal affine $T$-variety. Then straightening subtori in $T$ are in bijection with some facets of the cone $\omega(X)$. In particular, the number of straightening subtori in $T$ is at most finite. 
\end{proposition}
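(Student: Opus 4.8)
The plan is to translate the statement into the combinatorics of the cone $\omega(X)$ by means of Lemma~\ref{ulem}, and then to read off parabolicity of a subtorus from the face of $\omega(X)$ it exposes. A one-dimensional subtorus $\GG_m\subseteq T$ is the image of a one-parameter subgroup $\lambda_v$ for a primitive lattice vector $v\in N$, and the induced $\GG_m$-action corresponds to the $\ZZ$-grading $A=\bigoplus_{i\in\ZZ}A^{(v)}_i$ with $A^{(v)}_i=\bigoplus_{\langle v,u\rangle=i}A_u$, so that a weight component $A_u$ carries $\GG_m$-weight $\langle v,u\rangle$; this induced action is effective precisely when $\langle v,M_X\rangle=\ZZ$. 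First I would assume, as is standard, that the $T$-action is effective, so that $\omega(X)$ is full-dimensional in $M_\QQ$ and the inner normals of its facets span honest rays in $N_\QQ$ with unique primitive generators; this is exactly what forces the count to be finite.

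By Lemma~\ref{ulem} the subtorus $\lambda_v(\GG_m)$ is straightening if and only if condition c) holds for the grading above, namely the grading is nonnegative and $A^{(v)}_0$ has transcendence degree $\dim X-1$. The first requirement, $A^{(v)}_i=0$ for $i<0$, says exactly that $\langle v,u\rangle\ge 0$ for every $u\in M_X$, that is $v\in\omega(X)^{\vee}$. When this holds the weights orthogonal to $v$ are $M_X\cap v^{\perp}=M_X\cap F_v$, where $F_v:=\omega(X)\cap v^{\perp}$ is the face of $\omega(X)$ exposed by $v$.

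The technical heart is to compute the transcendence degree of $A^{(v)}_0=\bigoplus_{u\in M_X\cap F_v}A_u$. Writing $K_0$ for the field of fractions of the invariant algebra $A_{0}=\KK[X]^{T}$ and $\Lambda$ for the subgroup of $M$ generated by $M_X$, one shows, as in the proof of Theorem~\ref{tmain}, that homogeneous elements of a fixed weight are proportional over $K_0$; hence $\mathrm{Frac}\,A^{(v)}_0$ is the group algebra over $K_0$ of the subgroup of $M$ generated by $M_X\cap F_v$, and its transcendence degree equals $\mathrm{tr.deg}_\KK K_0+\dim F_v$. Taking $v=0$ in the same computation gives $\dim X=\mathrm{tr.deg}_\KK K_0+\dim\omega(X)$, so subtracting yields $\mathrm{tr.deg}\,A^{(v)}_0=\dim X-\codim_{\omega(X)}F_v$. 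Thus condition c) becomes $\codim_{\omega(X)}F_v=1$, i.e. $F_v$ is a facet. The main obstacle is justifying the rank equality $\rk\bigl(\text{group generated by }M_X\cap F_v\bigr)=\dim F_v$, that is, that the weights of $X$ lying on a face actually span that face; this follows because $\omega(X)$ is generated by $M_X$ and a face of a finitely generated cone is generated by those generators it contains.

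Putting this together, $\lambda_v(\GG_m)$ is straightening if and only if $v\in\omega(X)^{\vee}$ and the exposed face $F_v$ is a facet of $\omega(X)$. By the inner-normal description of facets, each facet $F$ is exposed by the primitive generator $v_F$ of a unique ray of $\omega(X)^{\vee}$, and the assignment $v\mapsto F_v$ sends straightening subtori bijectively onto those facets $F$ for which the induced $\GG_m$-action is effective, that is $\langle v_F,M_X\rangle=\ZZ$; this last constraint is the reason only \emph{some} facets occur. Finiteness is then immediate, since a polyhedral cone has finitely many facets.
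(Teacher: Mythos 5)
Your overall strategy (reduce to Lemma~\ref{ulem}, c) and read off the face $F_v=\omega(X)\cap v^{\perp}$ exposed by the one-parameter subgroup) is the paper's strategy as well, and several ingredients are correct: nonnegativity of the grading is equivalent to $v\in\omega(X)^{\vee}$, a face of $\omega(X)$ is generated by the weights of $M_X$ lying on it, injectivity follows from uniqueness of the primitive inner normal of a facet, and finiteness is clear. But the step you yourself call the technical heart is false. The claim that homogeneous elements of a fixed $T$-weight are proportional over $K_0=\mathrm{Frac}\,\KK[X]^{T}$, and the resulting formula
$$
\mathrm{tr.deg}\,A_0^{(v)}=\mathrm{tr.deg}_{\KK}K_0+\dim F_v,
$$
do not hold for a general $T$-action. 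In the proof of Theorem~\ref{tmain} the proportionality statement is legitimate only because the $\GG_m$-action there is already known to be parabolic: Lemma~\ref{ulem}, a) guarantees that invariants separate generic orbits, whence $\KK(X)^{\GG_m}=\mathrm{Frac}\,A_0$ by \cite[Lemma~2.1]{PV}. For the ambient torus $T$ no such hypothesis is available --- deciding when it holds for a subtorus is exactly what the proposition is about, so importing it is close to circular --- and in general $\mathrm{Frac}\,\KK[X]^{T}$ is strictly smaller than $\KK(X)^{T}$, while weight spaces need not be one-dimensional over either field.

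A concrete counterexample: let $T=\GG_m^2$ act on $X=\AA^3$ by $(t_1,t_2)\cdot(x,y,z)=(t_1x,t_1y,t_2z)$, so that $x,y$ have weight $(1,0)$, $z$ has weight $(0,1)$, $\omega(X)=\QQ_{\ge 0}^2$ and $K_0=\KK$. Already the proportionality fails: $x/y\notin K_0$. For the facet $F_1=\QQ_{\ge 0}(1,0)$, exposed by $v=(0,1)$, one has $A_0^{(v)}=\KK[x,y]$ of transcendence degree $2=\dim X-1$ (this subtorus \emph{is} straightening, with fixed-point divisor $\{z=0\}$), while your formula predicts $0+1=1$; replacing $K_0$ by $\KK(X)^{T}$ does not repair it, since for the other facet the corrected formula then overshoots. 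Worse, your characterization of the image is false: the facet $F_2=\QQ_{\ge 0}(0,1)$ is exposed by the primitive vector $v=(1,0)$, and the induced action $t\cdot(x,y,z)=(tx,ty,z)$ is effective (effectiveness is in fact automatic for primitive $v$ once the $T$-action is effective, so it cannot be the condition singling out ``some'' facets), yet this subtorus is not straightening: its fixed locus is the line $\{x=y=0\}$ and $A_0^{(v)}=\KK[z]$ has transcendence degree $1\neq\dim X-1$. The point your argument misses --- and the reason the paper claims a bijection only with \emph{some} unspecified facets --- is that no exact dimension formula in terms of $\dim F_v$ exists, because the ``multiplicity'' of the weight spaces varies from face to face. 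The paper instead proves only an inequality: if a face $\tau$ is a facet of a face $\lambda$, then the transcendence degree of $A(\lambda)$ exceeds that of $A(\tau)$ by at least one; iterating along a chain of faces gives $\mathrm{tr.deg}\,A(\tau)\le\dim X-\codim\tau$, which forces the exposed face of a straightening subtorus to be a facet without any claim about which facets actually occur.
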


\begin{proof}
Observe that a one-parameter subgroup $\GG_m$ in $T$ is nothing but an integer linear function $l$ on the space $M_{\QQ}$. In these terms, the grading on $A$ corresponding to the $\GG_m$-action on $X$ is 
\begin{equation} \label{grading} 
A=\bigoplus_{i\in \ZZ} A_i, \quad \text{where} \quad A_i=\bigoplus_{l(u)=i} A_u.   
\end{equation}
We conclude that such a grading contains no negative component if and only if the intersection of $\omega(X)$ with the hyperplane $\{l=0\}$ is a face of $\omega(X)$. For any face $\tau$ of the cone $\omega(X)$ we denote by $A(\tau)$ the subalgebra
$\oplus_{u\in\tau}A_u$. If a face $\tau$ is a facet of a face $\lambda$, then $A(\tau)\subseteq A(\lambda)$ and the transcendence degree of $A(\lambda)$ exceeds the transcendence degree of $A(\tau)$ by at least one. So for grading~(\ref{grading})
without negative components we may have that the transcendence degree of the subalgebra $A_0$ is $\dim X-1$ only if $A_0$ has the form $A(\lambda)$ for some facet $\lambda$ of the cone $\omega(X)$. Lemma~\ref{ulem}, c) implies that straightening one-parameter subgroups in $T$ correspond to some facets of $\omega(X)$. 
\end{proof} 

Now we come to an important particular case of subtorus actions.

\smallskip

\subsection{Affine toric varieties} Assume that a torus $T$ acts effectively on a normal affine variety $X$ with an open orbit, i.e., $X$ is an affine toric $T$-variety. Equivalently, we have $\dim A_u=1$ for any $u\in M_X$. Moreover, in this case the algebra $A$ is the semigroup algebra of the semigroup $M_X=\omega(X)\cap M$. We refer to~\cite{CLS,Fu,Oda} for a general theory of toric varieties. 

Under these assumptions, the transcendence degree of every subalgebra $A(\lambda)$, where $\lambda$ is a facet of the cone $\omega(X)$, equals $\dim X-1$. So we have a bijection between straightening subtori in $T$ and facets of $\omega(X)$. 

Let $\sigma(X):=\omega(X)^{\vee}$ be the cone dual to $\omega(X)$ in the dual space $N_{\QQ}$. Then the primitive lattice vectors $p_1,\ldots,p_m$ on the rays of $\sigma(X)$ are precisely the straightening subgroups in~$T$. It is well known that the rays of $\sigma(X)$ are in bijection with $T$-invariant prime divisors $D_1,\ldots,D_m$ on $X$; see, e.g.,~\cite[Theorem~3.2.6]{CLS}. Under this correspondence, the divisor $D_i$ is the set of fixed points of the subgroup $p_i$ \cite[Proposition~3.2.2]{CLS}. 

We say that a vector $e\in M$ is a \emph{Demazure root} of the cone $\sigma(X)$ if there is $1\le s\le m$ such that $\langle p_s,e\rangle=-1$ and $\langle p_i,e\rangle\ge 0$ for all $i\ne s$. So, the set $\RRR$ of all Demazure roots of the cone $\sigma(X)$ is a disjoint union of subsets $\RRR_1,\ldots,\RRR_m$ indexed by the corresponding index $s$. One can easily check that if $\dim X\ge 2$ then every set $\RRR_s$ is infinite. 

It is known that Demazure roots of $\sigma(X)$ are in bijection with $\GG_a$-subgroups in $\Aut(X)$ normalized by $T$~\cite[Theorem~1.6]{L1}. Moreover, the $\GG_a$-subgroup $H_e$ corresponding to a root $e\in\RRR_s$ acts on $X$ compatibly with the $\GG_m$-subgroup given by the vector $p_s\in N$~\cite[Proposition~2.1]{AKZ}. Summarizing these results, we obtain the following proposition.

\begin{proposition}
Let $X$ be an affine toric $T$-variety. Then straightening $\GG_m$-subgroups in $T$ are in bijection with $T$-invariant prime divisors, and every $T$-invariant prime divisor on $X$ is the set of $\GG_m$-fixed points for the corresponding straightening subtorus. 
\end{proposition}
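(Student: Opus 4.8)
The plan is to assemble the correspondences established in the two preceding subsections into a single chain of bijections, since the genuine content has essentially been prepared already. First I would sharpen Proposition~\ref{rprop} in the toric case, upgrading its ``some facets'' to ``all facets''. Because $\dim A_u = 1$ for every $u\in M_X$, for any face $\tau$ of $\omega(X)$ the subalgebra $A(\tau)=\bigoplus_{u\in\tau}A_u$ is the semigroup algebra of $\tau\cap M$, so its transcendence degree equals $\dim\tau$. When $\lambda$ is a facet of the full-dimensional cone $\omega(X)$, this dimension is $\dim X-1$, so the condition in Lemma~\ref{ulem}c) holds for \emph{every} facet. Hence straightening $\GG_m$-subgroups in $T$ are in bijection with all facets of $\omega(X)$.

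Next I would pass to the dual cone. Since the $T$-action is effective, $\omega(X)$ is full-dimensional in $M_{\QQ}$ and $\sigma(X)=\omega(X)^{\vee}$ is a strongly convex polyhedral cone in $N_{\QQ}$ with $\sigma(X)^{\vee}=\omega(X)$. The standard face correspondence then gives an inclusion-reversing bijection between the faces of $\omega(X)$ and those of $\sigma(X)$, under which facets of $\omega(X)$ (codimension-one faces) correspond to rays of $\sigma(X)$ (one-dimensional faces). As recorded in the proof of Proposition~\ref{rprop}, a straightening subtorus is given by an integer linear function $l$ with $\omega(X)\cap\{l=0\}=\lambda$ a facet and $l\ge 0$ on $\omega(X)$; such an $l$ lies in $\sigma(X)$ and generates precisely the ray dual to $\lambda$. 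Thus the straightening subtorus attached to $\lambda$ is the one-parameter subgroup $p_i$ primitive on that ray.

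Then I would invoke the toric dictionary. By \cite[Theorem~3.2.6]{CLS} the rays of $\sigma(X)$ are in bijection with the $T$-invariant prime divisors $D_1,\ldots,D_m$, and by \cite[Proposition~3.2.2]{CLS} the divisor $D_i$ is the set of fixed points of the subgroup $p_i$. Composing the three bijections---straightening subtori with facets of $\omega(X)$, facets with rays of $\sigma(X)$, and rays with $T$-invariant prime divisors---yields the asserted bijection between straightening $\GG_m$-subgroups and $T$-invariant prime divisors. Finally, the last sentence of Lemma~\ref{ulem} guarantees that for the parabolic action of the straightening subtorus $p_i$ the set of $\GG_m$-fixed points is exactly the distinguished prime divisor, confirming that $D_i$ is precisely that fixed-point locus.

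The argument is routine once the ingredients are in place, so there is no serious obstacle; the only point requiring care is the first step, namely verifying that the transcendence-degree condition of Lemma~\ref{ulem}c) is automatic for every facet in the toric setting, which is what turns the ``some facets'' of Proposition~\ref{rprop} into a genuine bijection.
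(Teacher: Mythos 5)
Your proposal is correct and follows essentially the same route as the paper: it upgrades Proposition~\ref{rprop} to a bijection with \emph{all} facets of $\omega(X)$ using $\dim A_u=1$, passes through the dual-cone face correspondence to identify straightening subtori with the primitive generators $p_i$ of the rays of $\sigma(X)$, and then invokes \cite[Theorem~3.2.6]{CLS} and \cite[Proposition~3.2.2]{CLS} for the correspondence with $T$-invariant prime divisors and the fixed-point statement. The paper's own argument is exactly this chain, stated as a summary of the preceding discussion.
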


This shows that an open orbit $\OOO$ of the group $\Aut(X)$ on $X$ intersects any $T$-invariant prime divisor. In fact, it is known that $\OOO$ coincides with the regular locus of $X$, and if $X$ is non-degenerate and $\dim X\ge 2$, then the group $\Aut(X)$ acts on $\OOO$ infinitely transitively \cite[Theorem~2.1]{AKZ}.

\begin{remark}
On a non-normal affine toric variety $X$, an open $\Aut(X)$-orbit may not intersect any $T$-invariant prime divisor on $X$. This is the case, for example, for the Neil's parabola from Remark~\ref{rem2}; for a general result on this effect, see~\cite[Theorem~2]{BG}.
\end{remark}

\begin{remark}
A description of all orbits of the group $\Aut(X)$ on an affine toric variety $X$ is obtained in~\cite[Theorem~5.1 and Proposition~5.6]{AB}.
\end{remark}

One should note that if we go from the toric case to an arbitrary torus action on a~normal affine variety, the situation may change completely. Let us consider an example of a torus action with generic orbits of codimension one. Fix positive integers $n_0,n_1,n_2$ with $n=n_0+n_1+n_2$. For each $i=0,1,2$, fix a tuple $l_i\in\ZZ_{>0}^{n_i}$ with $l_{i1}n_i>1$ and define a monomial
$$
T_i^{l_i}:=T_{i1}^{l_{i1}}\ldots T_{in_i}^{l_{in_i}}\in\KK[T_{ij}; \ i=0,1,2, \, j=1,\ldots,n_i].
$$
By a trinomial we mean a polynomial of the form $f=T_0^{l_0}+T_1^{l_1}+T_2^{l_2}$. A \emph{trinomial hypersurface} $X$ is the zero set $\{f=0\}$ in the affine space $\AA^n$. One can check that $X$ is a~normal affine variety of dimension $n-1$ and the variety $X$ carries a natural effective action of an $(n-2)$-dimensional torus $T$; see~\cite{HW}. By~\cite[Lemma~2]{Ar2}, there is no vertical $\GG_a$-action on $X$. In particular, there is no $T$-normalized $\GG_a$-action on $X$ that is compatible with a one-dimensional subtorus in $T$. 

\subsection{Homogeneous fiber spaces} We are going to apply the results discussed in the previous subsection to a wider class of almost homogeneous varieties. Let $G$ be a linear algebraic group and $T$ be a subtorus in $G$. We take an affine toric $T$-variety $Y$ and consider the homogeneous fiber space $X=G*_TY$, which is the quotient of the direct product $G\times Y$ by the $T$-action $t\cdot(g,y)=(gt^{-1},ty)$; see~\cite[Section~4.8]{PV} for details. The variety $G*_TY$ projects to the homogeneous space $G/T$, and all fibers of the projection are isomorphic to~$Y$. The $G$-action on $G\times Y$ by left multiplication on the first factor induces the $G$-action on $X$ with an open orbit. Moreover, the $T$-action on $G\times Y$ via the action on the second factor descents to a $T$-action on $X$. 

\begin{proposition} \label{pr1}
Let $G$ be a connected linear algebraic group, $T$ be a subtorus in $G$, and $Y$ be an affine toric $T$-variety. Consider the homogeneous fiber space $X=G*_TY$. Then an open orbit of the group $\Aut(X)$ on $X$ intersects every $G$-invariant prime divisor on $X$. 
\end{proposition}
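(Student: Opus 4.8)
The plan is to reduce the statement about the homogeneous fiber space $X=G*_TY$ to the toric situation on $Y$ handled earlier, using the $G$-equivariant structure of the projection $X\to G/T$. First I would identify the $G$-invariant prime divisors on $X$. Since $X=G*_TY$ is fibered over $G/T$ with fiber $Y$, and $G$ acts transitively on the base, any $G$-invariant prime divisor $\DDD$ on $X$ must be built from a $T$-invariant prime divisor on the fiber $Y$: concretely, a $T$-invariant prime divisor $D_i$ on $Y$ (corresponding to a ray of $\sigma(Y)$) gives rise, via the associated-bundle construction, to a $G$-invariant prime divisor $\DDD_i=G*_T D_i$ on $X$, and these are exactly the $G$-invariant prime divisors. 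I would verify this by noting that the image of $\DDD$ in $G/T$ is $G$-invariant, hence all of $G/T$, so $\DDD$ meets the fiber $Y=\{e\}*_TY$ in a nonempty $T$-invariant divisor of $Y$, which is a union of the toric divisors $D_i$.

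Next I would transport a compatible one-parameter subgroup and its companion $\GG_a$-action from $Y$ to $X$. By the toric analysis of the previous subsection, for each ray $p_s$ of $\sigma(Y)$ there is a straightening $\GG_m$-subgroup in $T$ fixing $D_s$ pointwise together with a Demazure root $e\in\RRR_s$ whose $\GG_a$-subgroup $H_e$ acts compatibly on $Y$; in particular generic $H_e$-orbits on $Y$ meet both the open $T$-orbit and the divisor $D_s$. Because $T\subseteq G$, this $\GG_m$-subgroup acts on $X$ through the second factor of $G\times Y$, and I would check that it fixes the $G$-invariant divisor $\DDD_s=G*_TD_s$ pointwise and acts nontrivially on $X$. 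I would then invoke Corollary~\ref{cmain}: the variety $X$ is almost homogeneous (the $G$-action has the open orbit $G*_T(\text{open }T\text{-orbit of }Y)$), it is normal and affine as an associated bundle over the affine-by-reductive data, and it carries a nontrivial $\GG_m$-action fixing the prime divisor $\DDD_s$. Hence the open $\Aut(X)$-orbit meets $\DDD_s$.

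Finally I would run this argument over all rays $p_s$ simultaneously to conclude that the single open $\Aut(X)$-orbit meets every $G$-invariant prime divisor $\DDD_s$, which by the first step exhausts all $G$-invariant prime divisors on $X$. The point is that $X$ has a \emph{unique} open $\Aut(X)$-orbit $\WWW$ (as in the proof of Corollary~\ref{cmain}, $\WWW$ is the union of all $\Aut(X)$-translates of the open $G$-orbit), and Corollary~\ref{cmain} shows $\WWW\cap\DDD_s\ne\varnothing$ for each $s$; no compatibility between the different $s$ is needed since it is always the same orbit $\WWW$.

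The main obstacle I expect is the first step: verifying cleanly that $X=G*_TY$ is normal and affine, and that every $G$-invariant prime divisor is of the form $G*_TD_i$ for a $T$-invariant divisor $D_i$ on $Y$. Affineness is the delicate point, since $G/T$ is affine precisely when $T$ is reductive, but the associated bundle $G*_TY$ can be affine for more subtle reasons when $Y$ is affine toric; I would need to argue that the $T$-action on $G\times Y$ admits a geometric quotient that is affine, for instance by exhibiting enough $T$-invariant regular functions, or by citing the affineness of homogeneous fiber spaces over affine toric fibers when the structure group acts with good quotient properties. Once affineness and normality are in place, the identification of $G$-invariant divisors and the application of Corollary~\ref{cmain} are comparatively routine.
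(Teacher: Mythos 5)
Your proposal follows essentially the same route as the paper: identify every $G$-invariant prime divisor as $G*_TD$ for a $T$-invariant prime divisor $D$ on $Y$, extend the straightening $\GG_m$-subgroup (acting through the second factor, which descends since $T$ is abelian) so that its fixed-point set is $G*_TD$, and conclude by Corollary~\ref{cmain}. The affineness issue you flag as the main obstacle is not actually delicate: a torus is reductive, so $G*_TY$ is the geometric quotient of the affine variety $G\times Y$ by a reductive group acting with closed orbits, hence affine and normal, exactly as covered by the paper's citation of \cite[Section~4.8]{PV}.
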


\begin{proof}
Every $G$-invariant prime divisor on $X$ has the form $G*_TD$, where $D$ is a $T$-invariant prime divisor on $Y$. The action of a straightening $\GG_m$-subgroup on $Y$ that fixes $D$ pointwise extends to $X$ and has the divisor $G*_TD$ as the set of fixed points. So the assertion follows from Corollary~\ref{cmain}.
\end{proof}

\subsection{Affine varieties with two orbits} Finally, let us show that if one adds a homogeneous divisor to a homogeneous space then the extended space is still homogeneous.  

\begin{proposition} \label{pr2}
Let $G$ be a reductive group and $X$ be a normal affine $G$-variety consisting of two $G$-orbits $\OOO_1$, $\OOO_2$ with $\dim\OOO_1=\dim\OOO_2+1$. Then the variety $X$ is $\Aut(X)$-homogeneous. 
\end{proposition}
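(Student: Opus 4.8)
The plan is to reduce the statement to Corollary~\ref{cmain} by producing a one-dimensional torus that acts on $X$ and fixes the smaller orbit pointwise. First I would record the basic geometry. Since $\dim\OOO_1=\dim\OOO_2+1$ and $X=\OOO_1\sqcup\OOO_2$, the orbit $\OOO_1$ is open, so $X$ is almost homogeneous under $G$, and $D:=\OOO_2$ is a closed irreducible subset of codimension one, hence a $G$-invariant prime divisor. As $\OOO_2$ is a closed orbit in an affine variety it is itself affine, so by Matsushima's criterion the stabilizer $H:=G_{x_0}$ of a point $x_0\in\OOO_2$ is reductive. Because $D$ is a single orbit, $X$ has the same local structure at every point of $D$; combined with normality (so that $X$ is regular in codimension one and thus smooth at the generic point of $D$) this shows that $X$ is smooth along all of $D$.

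Next I would apply Luna's étale slice theorem at $x_0$. The slice representation of $H$ is its action on the one-dimensional normal space $T_{x_0}X/T_{x_0}\OOO_2$, i.e. a character $\chi\colon H\to\GG_m$; it is non-trivial since the generic stabilizer $H_1$ has dimension $\dim H-1$, and in fact $H_1=\ker\chi$ with $H/H_1\cong\GG_m$. Smoothness along $D$ lets me take the slice to be the affine line $\AA^1$ with $H$ acting through $\chi$, so the slice theorem yields a $G$-equivariant étale morphism $\phi\colon G*_H\AA^1\to X$. I expect to argue that $\phi$ is in fact an isomorphism: its image is an open $G$-invariant neighborhood of $\OOO_2$, and its complement is a closed $G$-invariant subset contained in the single orbit $\OOO_1$, which forces emptiness since $\OOO_1$ is not closed; hence $\phi$ is surjective. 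It is moreover bijective, since it restricts to a $G$-equivariant self-map on each of the two orbits and such a map of a homogeneous space is a bijection. A bijective étale morphism onto the normal variety $X$ is an isomorphism.

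With the identification $X\cong G*_H\AA^1$ in hand, the scaling action $t\cdot[g,s]=[g,ts]$ is well defined, because scaling on $\AA^1$ commutes with the $H$-action through the character $\chi$, and it gives a non-trivial regular $\GG_m$-action on $X$ whose fixed locus is exactly the zero section $G*_H\{0\}=\OOO_2=D$. Thus $X$ is a normal affine almost homogeneous variety carrying a non-trivial $\GG_m$-action fixing the prime divisor $D$ pointwise, and Corollary~\ref{cmain} applies: the group $\Aut(X)$ acts with an open orbit $\WWW$ that meets $D$. To finish, I would observe that $\WWW$, being $\Aut(X)$-invariant, is in particular $G$-invariant; since it meets the single orbit $D=\OOO_2$ it contains all of $\OOO_2$, and being dense and open it contains the dense orbit $\OOO_1$. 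Hence $\WWW=X$, which is exactly the assertion that $X$ is $\Aut(X)$-homogeneous.

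The main obstacle I anticipate is the globalization step, namely showing that the étale slice map $\phi$ is genuinely an isomorphism, equivalently that the scaling $\GG_m$-action defined on the dense orbit extends regularly across $D$. Luna's theorem only provides an étale neighborhood, and it is the two-orbit hypothesis together with normality that I would use to upgrade ``étale and surjective'' to ``bijective, hence an isomorphism''.
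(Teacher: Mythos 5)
Your overall strategy is the same as the paper's: realize $X$ as $G*_H\AA^1$ over the closed orbit, let $\GG_m$ scale the fiber so that its fixed locus is exactly the zero section $D=\OOO_2$, and invoke Corollary~\ref{cmain}. Your endgame is correct and even makes explicit a step the paper leaves implicit (the open $\Aut(X)$-orbit $\WWW$ is $G$-invariant, meets both orbits, hence equals $X$), and your smoothness argument agrees with the paper's.

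The gap sits exactly in the step you flag as the main obstacle, and it is twofold. First, Luna's slice theorem does not yield a $G$-equivariant \'etale morphism $\phi\colon G*_H\AA^1\to X$. What it yields is a locally closed $H$-stable slice $S\subseteq X$ through $x_0$ together with two strongly \'etale morphisms $G*_HS\to X$ (onto a saturated open set) and $G*_HS\to G*_HN$, the latter induced by an $H$-equivariant \'etale map $S\to N$ to the normal space; there is no morphism from $G*_HN$ to $X$ at all, so your ``bijective \'etale onto a normal variety, hence an isomorphism'' argument has no map to act on --- inverting $S\to N$ is precisely what is impossible in the Zariski topology (as opposed to the analytic category). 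Second, even granted such a $\phi$, the injectivity argument is too quick: a $G$-equivariant \'etale surjection between homogeneous spaces need not be injective (e.g.\ $\GG_m\to\GG_m$, $t\mapsto t^2$, equivariant for suitable $\GG_m$-actions); injectivity over $\OOO_1$ amounts to showing that the stabilizer of a generic point \emph{equals} $\Ker\chi$, not merely that it contains it with finite index. The paper sidesteps both issues by citing \cite[Theorem~6.7]{PV}: since the two-orbit hypothesis forces a unique closed orbit, hence $\KK[X]^G=\KK$, that theorem gives the global $G$-isomorphism $X\cong G*_HV$ with $V$ the one-dimensional slice module directly. Alternatively, your argument can be repaired using the strongly \'etale (Cartesian over quotients) property of Luna's two maps: $X/\!/G$ being a point forces $G*_HS\to X$ to be an isomorphism, and the same property applied to $G*_HS\to G*_HN$ forces $N/\!/H$ to be a point (so $\chi$ is surjective, which you otherwise still need for your two-orbit count on $G*_H\AA^1$) and that map to be an isomorphism as well. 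With either repair, the rest of your proof goes through.
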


\begin{proof}
Since a normal variety is smooth in codimension one, the variety $X$ is smooth. Let $K$ be the stabilizer of a point in the orbit $\OOO_2$. By~\cite[Theorem~6.7]{PV}, the variety $X$ is isomorphic to $G*_KV$, where $V$ is a $K$-module. The condition $\dim\OOO_1=\dim\OOO_2+1$ implies that $V$ is one-dimensional. So $K$ acts on $V$ by some character. The action of $\GG_m$ on $G\times V$ by scalar multiplication on the second factor descents to $G*_KV$, and the divisor $\OOO_2$ consists of $\GG_m$-fixed points. So the assertion follows from Corollary~\ref{cmain}.
\end{proof}

\begin{remark}
If the group $G$ is semisimple, a more general version of  Proposition~\ref{pr2} is given in~\cite[Theorem~5.6]{AFKKZ}.
\end{remark}



\end{document}